\documentclass[a4paper]{amsart}
\usepackage{amssymb, mathtools, amsthm}
\usepackage[hidelinks]{hyperref}
\usepackage{tikz}
\usepackage{graphicx}
\usepackage{subcaption}
\usetikzlibrary{decorations.markings}

\usepackage{tikz}
\usetikzlibrary{arrows.meta,calc}

\newcommand{\InnerRadius}{0.80}
\pgfmathsetmacro{\OuterRadius}{(2+sqrt(3))*\InnerRadius}

\newcommand{\CubeProjectionBase}{%
  \draw[gray!55,thin]
    (0:\InnerRadius) -- (90:\InnerRadius) --
    (180:\InnerRadius) -- (270:\InnerRadius) -- cycle;
  \draw[gray!55,thin]
    (0:\OuterRadius) -- (90:\OuterRadius) --
    (180:\OuterRadius) -- (270:\OuterRadius) -- cycle;
  \draw[gray!55,thin]
    (0:\InnerRadius) -- (0:\OuterRadius);
  \draw[gray!55,thin]
    (90:\InnerRadius) -- (90:\OuterRadius);
  \draw[gray!55,thin]
    (180:\InnerRadius) -- (180:\OuterRadius);
  \draw[gray!55,thin]
    (270:\InnerRadius) -- (270:\OuterRadius);
}

\newcommand{\VertexArrow}[4]{%
  \coordinate (p#4) at (#2:#1);
  \fill (p#4) circle[radius=1.35pt];
  \node[font=\small] at ($(p#4)+(#2:4.2mm)$) {$p_{#4}$};

  \ifnum#3=1
    \draw[very thick,-{Stealth[length=2.2mm,width=1.7mm]}]
    	($(p#4)+({#2-90}:0mm)$) --
      ($(p#4)+({#2+90}:6mm)$);
  \else
    \draw[very thick,-{Stealth[length=2.2mm,width=1.7mm]}]
    	($(p#4)+({#2+90}:0mm)$) --
      ($(p#4)+({#2-90}:6mm)$);
  \fi
}

\newcommand{\ModeDiagram}[8]{%
  \begin{tikzpicture}[scale=0.5]
    \CubeProjectionBase

    \VertexArrow{\InnerRadius}{0}  {#1}{1}
    \VertexArrow{\InnerRadius}{90} {#2}{2}
    \VertexArrow{\InnerRadius}{180}{#3}{3}
    \VertexArrow{\InnerRadius}{270}{#4}{4}

    \VertexArrow{\OuterRadius}{0}  {#5}{5}
    \VertexArrow{\OuterRadius}{90} {#6}{6}
    \VertexArrow{\OuterRadius}{180}{#7}{7}
    \VertexArrow{\OuterRadius}{270}{#8}{8}
  \end{tikzpicture}%
}

\newcommand{\CP}{\mathbf{CP}}
\newcommand{\cO}{\mathcal{O}}
\newcommand{\FS}{\text{FS}}
\newcommand{\uS}{\mathbb{S}}

\newcommand{\laplace}{\Delta}
\newcommand{\Hess}{\nabla^2}

\newcommand{\dist}{\operatorname{dist}}

\newcommand{\re}{\operatorname{Re}}

\newcommand{\ii}{{\rm i}}
\newcommand{\dd}{{\rm d}}

\theoremstyle{plain} %text of this environment is typesetted in italics
\newtheorem*{theorem}{Theorem}
\newtheorem*{lemma}{Lemma}
\newtheorem*{conjecture}{Conjecture}
\newtheorem*{problem}{Problem}
\theoremstyle{definition} %text of this environment is typesetted in roman letters

\theoremstyle{remark}
\newtheorem{remark}{Remark}

\begin{document}

\title[P and D locally minimize Gauss variance]
{The Primitive and Diamond surfaces\\
locally minimize\\
the variance of Gaussian curvature}

\author{Hao Chen}
\email{chenhao5@shanghaitech.edu.cn}
\address{ShanghaiTech University, Shanghai, China}

\keywords{minimal surfaces}
\subjclass[2020]{Primary 53A10}

\date{\today}

\begin{abstract}
	We prove that the Schwarz' Primitive (P) and Diamond (D) surfaces are local
	minimizers of the variance of Gaussian curvature among local deformations
	within the moduli space of triply periodic minimal surfaces of genus 3
	(TPMSg3s). Our approach interprets the branch values of the Gauss map as a
	configuration of eight points on the sphere and expresses the variance of
	Gaussian curvature as the product of two integrals involving exponentials of
	Green's functions. We then show that the Hessian of this functional is
	positive definite at the cubic configuration when restricted to antipodal
	deformations, thereby establishing the local minimality of the P and D
	surfaces.  Along the other deformation directions, the Hessian is positive
	definite except for a two-dimensional eigenspace with slightly negative
	eigenvalues, leaving promising hope that the Gyroid is also a local
	minimizer.
\end{abstract}

\maketitle

\emph{Triply Periodic Minimal Surfaces} (TPMSs) are minimal surfaces in flat
3-tori.  The genus of a TPMS is at least three.  A TPMS of genus three (TPMSg3)
has a Weierstrass parameterization defined on a branched double cover of the
sphere with eight branch points.  The branched covering map is provided by the
Gauss map, so the branch points correspond to eight \emph{flat points} on the
TPMS where the Gaussian curvature vanishes.  We refer the reader
to~\cite{meeks1990} for general reference.

TPMSs find applications in natural sciences as a model for periodic
bicontinuous structures~\cite{hyde1996}.  In nature and in laboratories, the
chance to observe Schwarz' Primitive (P), Diamond (D) and Schoen's Gyroid (G)
surfaces is much higher than finding other TPMSs.  Note that these surfaces
belong to the same Bonnet family.  They are therefore locally isometric, have
the same branch points, and the same variance of Gaussian curvature.  For all the
three surfaces, the branch points are vertices of a cube.  Based on numerical
evidence~\cite{schroderturk2006}, it was conjectured that

\begin{conjecture}
	Schwarz' Primitive, Diamond, and Schoen's Gyroid surfaces have the minimal
	variance of Gaussian curvature among all Triply Periodic Minimal Surfaces of
	genus 3.
\end{conjecture}

In this short note, we prove that the Primitive and Diamond are indeed local
minimizers.

\begin{theorem} \label{thm:main}
	Schwarz' Primitive and Diamond surfaces are local minimizers for the variance
	of Gaussian curvature among local deformations within the moduli space of Triply
	Periodic Minimal Surfaces of genus 3.
\end{theorem}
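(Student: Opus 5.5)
The plan is to turn the problem into a finite-dimensional question about eight points on the sphere, following the strategy announced in the abstract. A TPMSg3 is determined, up to scaling and rigid motion, by its Gauss map $G\colon M\to\CP^1$. This map is a branched double cover with branch values $p_1,\dots,p_8\in\uS^2$, and $M$ is the hyperelliptic curve $w^2=\prod(z-p_i)$. For the P and D surfaces the eight points form a configuration invariant under $z\mapsto -1/\bar z$. Deformations in the moduli space that stay in the P/D component are exactly the antipodal ones, since the Weierstrass data $\dd z/w$ must stay in the $\ii\mathbb R$- or $\mathbb R$-period-killing family. So locally the space of TPMSg3 near P and D is parametrized by antipodally symmetric configurations of four pairs $\{p,-p\}$, modulo $SO(3)$ and the scaling constraint.

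First I will derive a formula for the variance. On $M$ the pullback metric is $ds^2=G^*g_{\FS}\cdot e^{2u}$ (up to constants), where the conformal factor $e^{2u}=|\dd z/w|^2(1+|z|^2)^2$. We have $K\,\dd A=-G^*\dd A_{\FS}$, so $\int K\,\dd A=-2\cdot4\pi$ is fixed. Hence
\[
\operatorname{Var}(K)=\frac{\int K^2\,\dd A}{A}-\Bigl(\frac{\int K\,\dd A}{A}\Bigr)^2,
\]
and minimizing it is equivalent to minimizing $A\int K^2\,\dd A$. Pushing down to the sphere, $\log$ of the factor $|w|^{-2}(1+|z|^2)^2$ equals, up to a constant depending on the configuration, $\sum_i G(\cdot,p_i)$, where $G$ is the spherical Green's function $\log|z-p|^2/((1+|z|^2)(1+|p|^2))$. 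I then get
\[
A=2\int_{\uS^2}\exp\Bigl(-\tfrac12\sum_i G(x,p_i)\Bigr)\,\dd\sigma,
\qquad
\int K^2\,\dd A=2\int_{\uS^2}\exp\Bigl(\tfrac12\sum_i G(x,p_i)\Bigr)\,\dd\sigma,
\]
and the configuration-dependent constants cancel in the product $F(p)=A\int K^2\,\dd A$. This scale-invariant product is the functional to minimize.

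Next comes criticality and the Hessian. By the octahedral symmetry of the cube configuration, the gradient of $F$ at the cube is an invariant vector field on the configuration space. It can only be radial, and radial motion is trivial on the sphere, so the gradient vanishes modulo rotations. For the Hessian I will differentiate under the integral. Perturbing $p_i$ along a tangent vector $v_i$ changes $\sum G$ by $\sum_i\langle\nabla_pG(x,p_i),v_i\rangle$ plus a second-order term. The Hessian of $F$ is therefore an explicit quadratic form whose coefficients are integrals over $\uS^2$ of products of $\nabla_pG$ and $\nabla_p^2G$ against $e^{\pm\frac12\sum G}$ at the cube.

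The space of antipodal deformations is $8$-dimensional, with $3$ directions from rotations, leaving $5$. I will decompose it into irreducible representations of the octahedral group $O$. These should be the $5$-dimensional $E\oplus T_2$ (the quadrupole-type deformations), so by Schur the Hessian reduces to two scalars. I will compute each scalar by choosing one explicit test deformation, for example the cube stretched into a box for $E$ and a sheared cube for $T_2$, and evaluating the second derivative of $F$. This is a one-variable calculation of integrals of the form $\int\prod|x-p_i|^{\pm1}\cdots$.

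The main obstacle is proving that these two numbers are positive rigorously. The integrands have singularities at the $p_i$ ($|x-p_i|^{-1}$, which is integrable), and the integrals are probably not in closed form; they reduce to elliptic-type integrals. I would first try to express them through complete elliptic integrals using the cube's symmetry: the curve $w^2=z^8+14z^4+1$ has a large automorphism group, so the integrals factor through elliptic curves. Failing that, I would fall back on certified interval quadrature after removing the singular parts analytically, bounding the numerical error well below the positivity margin. Positive definiteness on the antipodal directions then gives strict local minimality modulo the trivial directions, which proves the Theorem.
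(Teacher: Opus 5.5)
Your overall architecture matches the paper's. You write the variance as $\int_{\uS^2}e^{u}\,\dd A\cdot\int_{\uS^2}e^{-u}\,\dd A$ with $u$ a sum of spherical Green's functions, get criticality of the cube from symmetry, and apply Schur's lemma on the antipodal tangent space $E\oplus T_2$ (the paper's $E_g\oplus T_{2g}$).

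The genuine gap is your claim that deformations of P and D in the moduli space are \emph{exactly} the antipodal configurations ``since the Weierstrass data must stay in the period-killing family.'' That is not an argument. Meeks' theorem only says that antipodal configurations close the periods for a suitable Bonnet angle. Nothing you wrote rules out non-antipodal solutions of the period problem near the cube. This step carries the whole theorem. The Hessian on the full $13$-dimensional configuration space is not positive definite: the paper finds it negative on the non-antipodal summand $E_u$. So positivity on $E\oplus T_2$ gives local minimality only if the moduli space near P and D is tangent to exactly those directions. The paper supplies this through the local rigidity theorem of Koiso--Piccione--Shoda. Near P and D, the TPMSg3s up to homotheties and isometries form a smooth $5$-parameter family, which must therefore coincide with the $5$-dimensional Meeks family. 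You need this result, or an equivalent nondegeneracy statement for the period map at the cube.

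Two further points. First, differentiating under the integral sign does not give finite coefficients for the $e^{-u}$ factor. Near $p_i$ the integrand behaves like $d_i^{-1}$, where $d_i$ is the distance to $p_i$. Its second derivative in $p_i$ behaves like $d_i^{-3}$ with nonzero angular average, which is not integrable on a surface. The Hessian of $I^-=\int e^{-u}\,\dd A$ is finite, but only after regularization. The paper excises $\varepsilon$-disks, extends each $v_i$ to a Killing field $\tilde v_i$, and tracks the boundary terms to reach
\[
\Hess_{p}\log I^\pm(v,v)=\int_{\uS^2}\sum_{i,j}\langle 2\pi\nabla G_j,\tilde v_j-\tilde v_i\rangle\langle 2\pi\nabla G_i,\tilde v_i\rangle\,\dd\mu^\pm,
\]
in which the divergent diagonal term cancels.

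Second, the positivity of the two Schur scalars is the real content of the proof, and you leave it to a hoped-for elliptic reduction or certified quadrature. The paper needs no numerics. It takes generators equal to $\pm\partial_\phi$ on two groups of four vertices. This turns the $E_g$ Hessian into $-16\pi^2\int\partial_\phi G_A\,\partial_\phi G_B\,\dd\mu^\pm$, whose integrand is pointwise nonpositive. It also reduces the $T_{2g}$ case to explicit polynomial inequalities in $s=x^2+y^2$ and $t=x^2-y^2$.
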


Our approach extends the variance of Gaussian curvature to an energy defined on
the $13$-dimensional space of configurations of eight points on the sphere up
to global rotations.  Only a five-dimensional locus of these configurations
corresponds to TPMSg3s.  More specifically, for a TPMSg3, the eight points
correspond to the normal vectors at the eight flat points.  In particular, the
five-dimensional subspace of antipodal configurations corresponds to embedded
TPMSg3s known as the \emph{Meeks family}.  For Diamond, Primitive, and Gyroid
surfaces, the eight points are at the vertices of a cube.

In~\cite{koiso2018}, it was proved that, up to homotheties and Euclidean
isometries, the Primitive, Diamond, and Gyroid surfaces all belong to a unique
locally rigid 5-parameter smooth family of pairwise non-homothetic triply
periodic minimal surfaces.  In particular, the family that contains the Diamond
and the Primitive surfaces is the Meeks family.  Schoen's Gyroid, however, does
not belong to Meeks family.

\medskip

At the cubic configuration, we compute the Hessian of our energy with respect
to local deformations of the configuration.  We find, unfortunately, that the
Hessian has a $2$-dimensional negative eigenspace generated by ``twists''.  So
the Hessian is indefinite, and the cubic configuration is not a local
minimizer.  Nevertheless, we are able to prove positive definiteness on the
subspace of antipodal deformations.  So the cubic configuration is a local
minimizer among antipodal deformations, thereby proving the Theorem.

We are unable to conclude on the Gyroid because its $5$-parameter family of
local deformations is not fully understood.  Among the known deformations of
the Gyroid, it is indeed a local minimizer.  But in the presence of the
negative eigenspace, we cannot rule out the possibility that it might not be a
local minimizer along other deformations.

\begin{remark}
	Our energy generalizes to the configurations of $n$ points on the sphere.
	See Section~\ref{sec:npoints} for details.  This is an interesting problem
	for discrete geometry in its own right.
\end{remark}

The paper is organized as follows.  In Section~\ref{sec:reformulate}, we will
reformulate the problem in terms of point configurations on the sphere.  The
Theorem is proved in Section~\ref{sec:proof}, but many complicated
computational details are postponed to the appendix.  We compute the gradient
of our energy in Appendix~\ref{app:gradient}, the Hessian in
Appendix~\ref{app:hessian}, and analyze the definiteness of the Hessian on each
of irreducible representations in Appendix~\ref{app:irreps}.

\subsection*{Acknowledgement}

The author thanks Chengjian Yao and Thierry De Pauw for very helpful and
inspiring discussions.  ChatGPT 5.6 Sol was used to assist in discovering the
algebraic symmetrization in Appendix C.4 and in producing the TikZ code for
Figure~\ref{fig:generators}. All identities, estimates, and proofs were
subsequently checked independently by the author.

\section{Reformulation}
\label{sec:reformulate}

\subsection{Weierstrass parameterization}

Given a TPMSg3, let $p_1, \cdots, p_8 \in \mathbb{C}$ be (the stereographic
projection of) the Gauss map at the eight flat points.  Consider the
hyperelliptic Riemann surface $\Sigma$ of genus three defined by
\[
	w^2 = P(z) = \prod_{i=1}^8 (z-p_i).
\]
It can be seen as a branched double cover of the Riemann sphere $\uS^2 \simeq
\CP^1$ with eight branch points.  Then we have the following \emph{Weierstrass
parameterization} for the TPMS (up to translations)
\begin{equation}\label{eq:weierstrass}
	\Sigma \ni (z, w) \mapsto \re \int^{(z, w)} \frac{(1-z^2, \ii(1+z^2), 2z)}{w}
	e^{i \vartheta}\, \dd z.
\end{equation}
Switching the two branches by $(z,w) \mapsto (z,-w)$ corresponds to the
inversion with respect to any of the eight flat points.  The parameter
$\vartheta$ is the \emph{Bonnet angle}; changing $\vartheta$ gives a locally
isometric deformation of the surface known as \emph{Bonnet rotation}.  Two
surfaces whose $\vartheta$ differ by $\pi/2$ are called \emph{conjugate pairs}.

For the minimal surface to be triply periodic and globally well-defined, one
needs to solve the period problem (aka, \emph{close the periods}).  More
precisely, one finds $p_i$'s and $\vartheta$ so that the real parts of the
integrals over closed cycles on $\Sigma$ form a three dimensional lattice.  The
period problem involves three real equations for each of the three coordinates,
hence nine real equations in total.  Since we have $17$ real parameters
($p_i$'s and $\vartheta$), the set of TPMSg3s forms a $8$-dimensional variety up
to translations and homotheties, or $5$-dimensional variety up to Euclidean isometries and homotheties.

Closing the periods is usually very difficult.  But Meeks proved
in~\cite{meeks1990} that, when the branch points form four antipodal pairs,
there exists a conjugate pair of embedded TPMSg3s.  Up to homotheties and
Euclidean isometries, these TPMSg3s form a $5$-dimensional manifold termed
\emph{Meeks family}.

In this paper, we will not insist on closing the periods.  The variance of
Gaussian curvature is invariant under local isometries such as Bonnet rotations.
So it is a function defined on the $13$-dimensional space of configurations of
eight points on the sphere up to rotations.  The Meeks surfaces correspond to
the five-dimensional locus of antipodal configurations, for which we know the
existence of $\vartheta$ that closes the periods.  Our main Theorem is
equivalent to that the cubic configuration is a local minimizer of the Gauss
variance on this $5$-dimensional subspace.

\subsection{Variance of Gaussian curvature}

More precisely, we want to minimize the (dimensionless) variance of Gauss
curvature, defined by
\[
	\sigma(K)
	= \frac{\langle K^2 \rangle - \langle K \rangle^2}{\langle K \rangle^2}
	= \Bigg[\frac{\int_\Sigma K^2 \, \dd A}{\int_\Sigma \dd A} - \Bigg( \frac{\int_\Sigma K \, \dd A}{\int_\Sigma \dd A} \Bigg)^2\Bigg] / \Bigg( \frac{\int_\Sigma K \, \dd A}{\int_\Sigma \dd A} \Bigg)^2.
\]
By Gauss-Bonnet, $\int_\Sigma K \, \dd A$ is a topological invariant.  It is then
equivalent to minimize
\begin{equation}\label{eq:form1}
	\int_\Sigma K^2 \dd A \times \int_\Sigma \dd A.
\end{equation}

From the Weierstrass parameterization, we compute
\[
	\dd A = \frac{(1+|z|^2)^2}{|w|^2} \, dx \wedge dy, \qquad K = -4 \frac{|w|^2}{(1+|z|^2)^4},
\]
where $z = x + y \ii$.  Indeed, one verifies that
\[
	\int_\Sigma K \dd A = -4 \int_\Sigma \frac{dx \wedge dy}{(1+|z|^2)^2} = - 8 \pi
\]
as expected.  The area element and the Gaussian curvature are invariant after
switching the branches.  It then suffices to compute the invariance on one of
the branches.  So \eqref{eq:form1} becomes (up to a constant factor)
\begin{equation}\label{eq:form2}
	\int_{\CP^1} \frac{|P(z)|}{(1+|z|^2)^6} \, dx \wedge dy \times
	\int_{\CP^1} \frac{(1+|z|^2)^2}{|P(z)|} \, dx \wedge dy.
\end{equation}
Note that the second integral is improper: The integrand is not defined at the
zeros of $P(z)$.  So the integral is defined as the limit
\[
	\int_{\CP^1} \frac{(1+|z|^2)^2}{|P(z)|} \, dx \wedge dy
	= \lim_{\varepsilon \to 0}
	\int_{\CP^1 \setminus \cup_i D^i_\varepsilon} \frac{(1+|z|^2)^2}{|P(z)|} \, dx \wedge dy,
\]
where $D^i_\varepsilon$ denote the $\varepsilon$-disks around $p_i$ under the
Fubini--Study metric.  Fortunately, the limit converges.

\subsection{Fubini--Study metric}

Note that
\[
	\dd A_\FS = \frac{4 \, dx \wedge dy}{(1+|z|^2)^2}
\]
is the area element for the Fubini--Study metric on $\CP^1$, which is
(normalized to) exactly the standard metric on the unit sphere $\uS^2$.
Moreover, we can identify $P(z)$ to a global section $s$ of the holomorphic
line bundle $\cO(8) = \cO(1)^{\otimes 8}$ over $\CP^1$, and
\[
	\|s\|_\FS = \frac{|P(z)|}{(1+|z|^2)^4}
\]
is its point-wise Fubini--Study norm.  See, for instance,
\cite{huybrechts2005}.  Then \eqref{eq:form2} becomes (up to a constant factor)
\begin{equation}\label{eq:form3}
	J_8 := 
	\int_{\CP^1} \| s \|_\FS \, \dd A_\FS \times
	\int_{\CP^1} \| s \|_\FS^{-1} \, \dd A_\FS. 
\end{equation}
Note again that the second integral is improper.  It is defined as the
convergent limit of the integral over $\CP^1 \setminus \cup_i D^i_\varepsilon$
as $\varepsilon \to 0$.

\subsection{Lelong--Poincar\'e Equation}

Define
\[
	u = \log \| s \|_\FS.
\]
Note that the Fubini--Study metric on $\cO(8)$ is Hermitian.  One then easily verifies the
Lelong--Poincar\'e Equation
\[
	\laplace u = 2\pi \sum_{i=1}^8 \delta(z-p_i) - 4\frac{4}{(1+|z|^2)^2},
\]
or, using the Laplacian under the Fubini--Study metric,
\[
	\laplace_\FS u = \frac{(1+|z|^2)^2}{4} \laplace u
	= 2\pi \sum_{i=1}^8 \delta_i(z) - 4,
\]
where
\[
	\delta_i(z) = \frac{(1+|z|^2)^2}{4} \delta(z-p_i)
\]
denote the Dirac delta function under the Fubini--Study metric centered at
$p_i$.

The solution is given by
\[
	u = 2\pi \sum_{i = 1}^8 G_i(z),
\]
where $G_i(z)$ is the Green function solving
\begin{equation}\label{eq:laplaceG}
	\laplace_\FS G_i(z) = \delta_i(z) - \frac{1}{4\pi}.
\end{equation}
On the unit sphere $\uS^2 \simeq \CP^1$, we have
\begin{equation}\label{eq:Green}
	G_i(z) = \frac{1}{2\pi} \log \sin \frac{\dist(z, p_i)}{2} + C
\end{equation}
where $\dist(z, p_i)$ is the spherical distance between $z$ and $p_i$.  Then
\eqref{eq:form3} becomes
\begin{equation}\label{eq:form4}
	J_8 =
	\int_{\uS^2} \exp(u) \, \dd A \times
	\int_{\uS^2} \exp(-u) \, \dd A,
\end{equation}
where $\dd A$ is the standard spherical area element on the unit sphere.
Note that \eqref{eq:form4} does not depend on the constant $C$ in $G_i$, so we
assume $C=0$ from now on.  Note again that the second integral is improper and
is defined as the convergent limit
\[
	\int_{\uS^2} \exp(-u) \, \dd A = \lim_{\varepsilon \to 0}
	\int_{\Omega_\varepsilon} \exp(-u) \, \dd A,
\]
where
\[
	\Omega_\varepsilon = \uS^2 \setminus \cup_{i=1}^n D^i_\varepsilon.
\]

The Conjecture would be proved if $J_8$ is minimized among all configurations
when $p_i \in \uS^2$ are the vertices of a cube.  This is, unfortunately, not
the case, as we will see later.

\begin{remark}
	With this reformulation, we actually extend the variance of Gaussian
	curvature beyond embedded TPMSg3s.  We are now considering all minimal
	surfaces with the Weierstrass parameterization~\eqref{eq:weierstrass}.  In
	particular, we do not need the period problems to be solved, not to mention
	the embeddedness.  The variance can be seen as an energy function defined on
	the space of point configurations.
\end{remark}

\subsection{Generalization}
\label{sec:npoints}

The analysis above can be generalized as follows.  Fix $n$ distinct points
$p_1, \cdots, p_n$, and define the polynomial
\[
	P(z) = \prod_{i=1}^n (z-p_i).
\]
Consider the product of integrals
\[
	\int_{\CP^1} \frac{|P(z)|}{(1+|z|^2)^{\frac{n}{2}+2}} \, dx \wedge dy \times
	\int_{\CP^1} \frac{(1+|z|^2)^{\frac{n}{2}-2}}{|P(z)|} \, dx \wedge dy.
\]
Note that
\[
	\|s\|_\FS = \frac{|P(z)|}{(1+|z|^2)^{n/2}}
\]
is the point-wise Fubini--Study norm of $P(z)$ as a global section $s$ of
$\cO(n)$ over $\CP^1$. Then the product can be rewritten (up to a constant
factor) into the form
\[
	J_n :=
	\int_{\CP^1} \| s \|_\FS \, \dd A_\FS \times
	\int_{\CP^1} \| s \|_\FS^{-1} \, \dd A_\FS. 
\]
Now the Lelong--Poincar\'e Equation is
\[
	\laplace_\FS u = 2\pi \sum_{i=1}^n \delta_i(z) - n/2,
\]
which is solved by
\[
	u = 2\pi \sum_{i=1}^n G_i(z).
\]
So
\[
	J_n = 
	\int_{\uS^2} \exp(u) \, \dd A \times
	\int_{\uS^2} \exp(-u) \, \dd A.
\]
And we could ask the following problem
\begin{problem}
	What are the positions of $p_1, \cdots, p_n$ that minimize $J_n$?
\end{problem}
This is an interesting problem in its own right.  When $n = 8$, this is exactly
the problem of finding branch values to minimize the variance of the Gauss
curvature.

\section{Proof}
\label{sec:proof}

We now present our proof to the Theorem.  Computational details are left to the
appendix.

\begin{proof}
	Recall that
	\[
		\Omega_\varepsilon = \uS^2 \setminus \cup_{i=1}^n D^i_\varepsilon,
	\]
	where $D^i_\varepsilon$ denote the $\varepsilon$-disks around $p_i$ under the
	spherical metric.  Define
	\[
		I^\pm_\varepsilon = \int_{\Omega_\varepsilon} \exp(\pm u) \, \dd A,
	\]
	and two probability measure $\dd\mu^\pm_\varepsilon = \rho^\pm_\varepsilon \dd
	A$ on $\Omega_\varepsilon$, where
	\[
		\rho_\varepsilon^\pm
		= \frac{\exp(\pm u)}{\int_{\Omega_\varepsilon} \exp(\pm u) \, \dd A}
		= \frac{\exp(\pm u)}{I^\pm_\varepsilon}.
	\]
	We need to compute the gradient and the Hessian of
	\[
		\log J_8 = \log I^+ + \log I_-.
	\]
	In Appendix~\ref{app:gradient} we compute the gradient of $\log I^\pm$ as
	follows:
	\begin{align}
		\langle \nabla_{\! p} \log I^\pm, v\rangle :=&
		\lim_{\varepsilon\to 0}\langle \nabla_{\! p} \log I^\pm_\varepsilon, v\rangle \nonumber\\
		=& \int_{\uS^2} \sum_{i=1}^n \langle \mp 2\pi \nabla_{\! z} G_i, \tilde v_i \rangle \dd\mu^\pm,\label{eq:gradient}
	\end{align}
	where
	\[
		v = (v_1, \cdots, v_8) \in \bigoplus_{i=1}^8 T_{p_i} \uS^2,
	\]
	and $\tilde v_i$ is a Killing field such that $\tilde v_i (p_i) = v_i$.  Note
	that the formula is independent of the choice of the Killing field.  To see
	this, assume another choice $\tilde v'_i$.  Then $\tilde w_i = \tilde v'_i -
	\tilde v_i$ is a Killing field that vanishes at $p_i$, which is given as a
	rotation around $p_i$.  Consequently, we have $\langle 2\pi \nabla_{\!  z}
	G_i, \tilde w_i \rangle = 0$.  Moreover, if $\tilde v_i = \tilde v_*$ is the
	same Killing field for all $i$, one easily verifies that the gradient
	vanishes.  This is expected because $J$ is invariant under global rotations.

	At the cubic configuration, the gradient may be identified with a tuple of
	eight tangent vectors, one at each $p_i$. The $O_h$-invariance of $I^\pm$
	implies that this tuple is invariant under the induced action of $O_h$,
	including its permutation of the vertices. The tangent representation has no
	nonzero invariant vector, so the gradient must vanish at the cube.
	Therefore, the cube configuration is a critical point of $\log I^\pm$ and of
	$\log J_8$.

	Then in Appendix~\ref{app:hessian}, we compute the Hessian of $\log I^\pm$ at
	a critical point as follows:
	\begin{align}
		\Hess_{\! p} \log I^\pm (v, v)
		:=& \lim_{\varepsilon \to 0} \Hess_{\! p} \log I^\pm_\varepsilon (v, v) \nonumber\\
		=& \int_{\uS^2} \sum_{i=1}^n \sum_{j=1}^n \langle 2\pi \nabla_{\! z} G_j, \tilde v_j - \tilde v_i \rangle \langle 2\pi \nabla_{\! z} G_i, \tilde v_i \rangle \, \dd \mu^\pm.\label{eq:hessian}
	\end{align}
	Again, the formula is independent of the choice of the Killing field by the
	same argument as above.  Moreover, if $\tilde v_i = \tilde v_*$ is the same
	Killing field for all $i$, one easily verifies that the Hessian vanishes.

	Without loss of generality, we assume that the vertices of the cube, in the
	standard coordinates $(\theta, \phi)$ of the sphere, are as follows:
	\begin{align*}
		p_1 &= (\theta_0, 0),&
		p_5 &= (\pi - \theta_0, 0),\\
		p_2 &= (\theta_0, \pi/2),&
		p_6 &= (\pi - \theta_0, \pi/2),\\
		p_3 &= (\theta_0, \pi),&
		p_7 &= (\pi - \theta_0, \pi),\\
		p_4 &= (\theta_0, 3\pi/2),&
		p_8 &= (\pi - \theta_0, 3\pi/2).
	\end{align*}
	where $\theta_0 = \arccos(1/\sqrt{3})$.

	We study the Hessian on the $13$-dimensional space
	\[
		V = \Big(\bigoplus_{i=1}^8 T_{p_i} \uS^2\Big) / \mathcal{R}
	\]
	at the cubic configuration, where
	\[
		\mathcal{R} = \{(R(p_1), \cdots, R(p_8)) \mid R \in \mathfrak{so}(3)\}
	\]
	is the subspace of deformations generated by global rotations.  For this
	purpose, we decompose $V$ into irreducible representations (irreps) of the
	symmetry group $O_h$ of the cube~\cite{atkins1970} as follows:

	\begin{itemize}
	% \item The $3$-dimensional space $T_{1g}$ generated by rotations.  One
	% 	generator is given by
	% 	\[
	% 		v_i = \frac{\partial_\phi}{\sin\theta}\Big|_{p_i} \quad 1 \le i \le 8.
	% 	\]
		\item The $3$-dimensional space $T_{1u}$ generated by slides (that moves the
			center of mass).  One generator is given by
			\[
			% v_i = \partial_\theta|_{p_i}, \quad 1 \le i \le 8.
				v_i = \frac{\partial_\phi}{\sin\theta}\Big|_{p_i} \text{ for } i \in \{1, 2, 5, 6\},\qquad
				v_i =-\frac{\partial_\phi}{\sin\theta}\Big|_{p_i} \text{ for } i \in \{3, 4, 7, 8\}.
			\]
		\item The $2$-dimensional space $E_g$ generated by the orbit of
			\[
				v_i = \frac{\partial_\phi}{\sin\theta}\Big|_{p_i} \text{ for } i \in \{1, 3, 5, 7\},\qquad
				v_i =-\frac{\partial_\phi}{\sin\theta}\Big|_{p_i} \text{ for } i \in \{2, 4, 6, 8\}.
			\]
			Every face remains rectangular under these deformations.
		\item The $2$-dimensional space $E_u$ generated by the orbit of
			\[
				v_i = \frac{\partial_\phi}{\sin\theta}\Big|_{p_i} \text{ for } i \in \{1, 2, 3, 4\},\qquad
				v_i =-\frac{\partial_\phi}{\sin\theta}\Big|_{p_i} \text{ for } i \in \{5, 6, 7, 8\}.
			\]
			The face centers of the cube remain symmetry centers under these deformations.
		\item The $3$-dimensional space $T_{2g}$ generated by the orbit of
			\[
			% v_i = \partial_\theta|_{p_i}, & \text{ for } i \in \{1, 3, 6, 8\},\qquad
			% v_i =-\partial_\theta|_{p_i}, & \text{ for } i \in \{2, 4, 5, 7\}.
				v_i = \frac{\partial_\phi}{\sin\theta}\Big|_{p_i} \text{ for } i \in \{1, 2, 7, 8\},\qquad
				v_i =-\frac{\partial_\phi}{\sin\theta}\Big|_{p_i} \text{ for } i \in \{3, 4, 5, 6\}.
			\]
		\item The $3$-dimensional space $T_{2u}$ generated by the orbit of
			\[
				v_i = \frac{\partial_\phi}{\sin\theta}\Big|_{p_i} \text{ for } i \in \{1, 3, 6, 8\},\qquad
				v_i =-\frac{\partial_\phi}{\sin\theta}\Big|_{p_i} \text{ for } i \in \{2, 4, 5, 7\}.
			\]
	\end{itemize}

	See Figure~\ref{fig:generators} for diagrams for the generators after
	stereographic projection.

	\begin{figure}[ht]
  	\centering

  % ---------- First row: u-modes ----------
  	\begin{subfigure}[t]{0.31\textwidth}
    	\centering
      \ModeDiagram
      { 1}{ 1}{-1}{-1}
      { 1}{ 1}{-1}{-1}%
    	\caption{$T_{1u}$}
    	\label{fig:T1u-mode}
  	\end{subfigure}\hfill
  	\begin{subfigure}[t]{0.31\textwidth}
    	\centering
      \ModeDiagram
      { 1}{ 1}{ 1}{ 1}
      {-1}{-1}{-1}{-1}%
    	\caption{$E_u$}
    	\label{fig:Eu-mode}
  	\end{subfigure}\hfill
  	\begin{subfigure}[t]{0.31\textwidth}
    	\centering
      \ModeDiagram
      { 1}{-1}{ 1}{-1}
      {-1}{ 1}{-1}{ 1}%
    	\caption{$T_{2u}$}
    	\label{fig:T2u-mode}
  	\end{subfigure}

  	\medskip

  % ---------- Second row: g-modes ----------
  	\begin{subfigure}[t]{0.31\textwidth}
    	\centering
      \ModeDiagram
      { 1}{-1}{ 1}{-1}
      { 1}{-1}{ 1}{-1}%
    	\caption{$E_g$}
    	\label{fig:Eg-mode}
  	\end{subfigure}
  	\hspace{0.08\textwidth}
  	\begin{subfigure}[t]{0.31\textwidth}
    	\centering
      \ModeDiagram
      { 1}{ 1}{-1}{-1}
      {-1}{-1}{ 1}{ 1}%
    	\caption{$T_{2g}$}
    	\label{fig:T2g-mode}
  	\end{subfigure}

  	\caption{Generators of the five deformation modes.}
  	\label{fig:generators}
	\end{figure}

	The Hessian is an $O_h$-invariant symmetric bilinear form. Since the five
	irreducible summands are pairwise non-isomorphic and occur with multiplicity
	one, by Schur's lemma, the restriction of the Hessian to each summand is a
	scalar multiple of identity.  So we only need to study the Hessian on one
	generator for each of the irreps.  The analysis of definiteness is carried
	out in Appendix~\ref{app:irreps}.  The analysis for $T_{2g}$
	(Appendix~\ref{app:T2g}) is assisted by ChatGPT 5.6 Sol.  The result is that
	the Hessian of $\log I^\pm$ at the cube is positive definite on all irreps
	except for $E_u$, where the Hessian is negative.

	By the local rigidity theorem of Koiso–Piccione–Shoda, every sufficiently
	small TPMSg3 deformation of P or D, up to homotheties and Euclidean
	isometries, belongs to a 5-parameter family, which can only be the Meeks
	family. So the corresponding point configuration remains antipodal. The
	tangent space to the locus of antipodal configurations at the cube is $E_g
	\oplus T_{2g}$.  Since the Hessian of $\log J_8$ is positive definite on this
	space, the cubic configuration is a strict local minimizer among antipodal
	configurations.

	This finishes the proof that P and D are local minimizers of the Gauss
	variance.
\end{proof}

For the G surface, its local deformations are not fully understood.  We now
know a tetragonal and a rhombohedral deformation.  The corresponding $v$ lies
in $E_u \oplus E_g$ and $E_u \oplus T_{2g}$, respectively.  Numerics show that
G is indeed a local minimizer for the Gauss variance along these
deformations~\cite{schroderturk2006}, meaning that the positive contributions of
$E_g$ and $T_{2g}$ outweigh the negative contribution of $E_u$.

Indeed, numerics show that the negative eigenvalue of $E_u$ is very close to
$0$.  The local deformations of $G$ correspond to a five-dimensional subspace
of the 13-dimensional $V$.  It is very promising to hope that the restriction
of the Hessian to the local deformations of $G$ is positive definite, so G is
indeed a local minimizer of the Gauss variance.  To confirm this, we need to
work towards a better understanding of the deformations of G.

\bibliography{References}
\bibliographystyle{plain}

\appendix

\section{The gradient}
\label{app:gradient}

Recall that
\[
	\Omega_\varepsilon = \uS^2 \setminus \cup_{i=1}^n D^i_\varepsilon,
\]
where $D^i_\varepsilon$ denote the $\varepsilon$-disks around $p_i$ under the
spherical metric.  Define
\[
	I^\pm_\varepsilon = \int_{\Omega_\varepsilon} \exp(\pm u) \, \dd A,
\]
and two probability measure $\dd\mu^\pm_\varepsilon = \rho^\pm_\varepsilon \dd
A$ on $\Omega_\varepsilon$, where
\[
	\rho_\varepsilon^\pm
	= \frac{\exp(\pm u)}{\int_{\Omega_\varepsilon} \exp(\pm u) \, \dd A}
	= \frac{\exp(\pm u)}{I^\pm_\varepsilon}.
\]

We compute
\begin{align*}
	\langle \nabla_{\! p_i} \log I^\pm_\varepsilon, v_i\rangle &=
	\frac{\langle \nabla_{\! p_i} I^\pm_\varepsilon, v_i\rangle}{I^\pm_\varepsilon}
	\\ &=
	\frac{1}{I^\pm_\varepsilon}
	\Big(
		\int_{\Omega_\varepsilon} \langle \pm 2\pi \nabla_{\! p_i} G_i, v_i \rangle \exp(\pm u) \, \dd A
		+ \oint_{\partial D^i_\varepsilon} \exp(\pm u) \langle n_i, \tilde v_i \rangle \dd s
	\Big)
	\\ &=
	\int_{\Omega_\varepsilon} \langle \pm 2\pi \nabla_{\! p_i} G_i, v_i \rangle \rho_\varepsilon^\pm \, \dd A
	+ \oint_{\partial D^i_\varepsilon} \langle n_i, \tilde v_i \rangle \rho_\varepsilon^\pm \dd s
\end{align*}
where $v_i \in T_{p_i} \uS^2$, $n_i$ is the unit normal vectors on the boundary
of $D^i_\varepsilon$ pointing to the interiors of the disks, and $\tilde v_i$
is a Killing field such that $\tilde v_i (p_i) = v_i$. So for
\[
	v = (v_1, \cdots, v_8) \in \bigoplus_{i=1}^8 T_{p_i} \uS^2,
\]
we have
\begin{equation}\label{eq:grad}
	\langle \nabla_{\! p} \log I^\pm_\varepsilon, v\rangle 
	= \int_{\Omega_\varepsilon} \sum_{i=1}^n \langle \mp 2\pi \nabla_{\! z} G_i, \tilde v_i \rangle \dd\mu_\varepsilon^\pm
	+ \sum_{i=1}^n \oint_{\partial D^i_\varepsilon} \langle n_i, \tilde v_i \rangle \rho_\varepsilon^\pm \dd s
\end{equation}

Note that the formula does not depend on the choice of the Killing fields
$\tilde v_i$.  To see this, assume another choice $\tilde v'_i$.  Then $\tilde
w_i = \tilde v'_i - \tilde v_i$ is a Killing field that vanishes at $p_i$,
which is given as a rotation around $p_i$.  Consequently, we have $\langle \mp
2\pi \nabla_{\! z} G_i, \tilde w_i \rangle = 0$ and $\langle n_i, \tilde w_i
\rangle = 0$.

Note also that, when $\tilde v_i = \tilde v_*$ is the same Killing field for
all $i$, then
\begin{align*}
	\langle \nabla_{\! p} \log I^\pm_\varepsilon, v\rangle 
	&= \int_{\Omega_\varepsilon} \sum_{i=1}^n \langle \mp 2\pi \nabla_{\! z} G_i, \tilde v_* \rangle \dd\mu_\varepsilon^\pm
	+ \sum_{i=1}^n \oint_{\partial D^i_\varepsilon} \langle n_i, \tilde v_* \rangle \rho_\varepsilon^\pm \dd s\\
	&= \int_{\Omega_\varepsilon} \langle \mp \nabla_{\! z} u, \tilde v_* \rangle \dd\mu_\varepsilon^\pm
	+ \int_{\Omega_\varepsilon} \langle \pm \nabla_{\! z} u, \tilde v_* \rangle \dd \mu_\varepsilon^\pm = 0
\end{align*}
So a deformation along a Killing field does not change $I^\pm_\varepsilon$.

\medskip

Recall that $2\pi G_i = \log \sin (\frac{1}{2} \dist(z, p_i))$, so
\[
	2\pi \nabla_{\! z} G_i \sim \frac{\hat w_i}{\dist(z, p_i)}, \qquad
	\rho^\pm_\varepsilon \sim \dist(z, p_i)^{\pm 1} \qquad
\]
as $z \to p_i$, where $\hat w_i = \nabla_z \dist(z, p_i)$.  It is then obvious
that~\eqref{eq:grad} converges as $\varepsilon \to 0$ for $\mu^+_\varepsilon$.
As for $\mu^-_\varepsilon$, in the standard spherical coordinate with the pole
$p_i$, polar angle $\theta$ and azimuth angle $\phi$, the integrands
\begin{align*}
	\langle 2\pi \nabla_{\! z} G_i, \tilde v_i \rangle \dd\mu_\varepsilon^-
	&\sim \frac{2}{\theta I^-} \langle \hat w_i(\theta, \phi), \tilde v_i(\theta, \phi) \rangle \dd \theta \dd \phi,\\
	\langle n_i, \tilde v_i \rangle \rho_\varepsilon^- \dd s
	&\sim \frac{-2}{I^-} \langle \hat w_i(\varepsilon, \phi), \tilde v_i(\varepsilon, \phi) \rangle \dd \phi
\end{align*}
as $\theta, \varepsilon \to 0$.  The integrals in~\eqref{eq:grad} then converge
as $\varepsilon \to 0$ because
\[
	\lim_{\theta \to 0} \int_0^{2\pi} \langle \hat w_i(\theta, \phi), \tilde v_i(\theta, \phi) \rangle \dd \phi \to 0.
\]
More specifically, we have
\[
	\langle \nabla_{\! p} \log I^\pm, v\rangle :=
	\lim_{\varepsilon\to 0}\langle \nabla_{\! p} \log I^\pm_\varepsilon, v\rangle 
	= \int_{\uS^2} \sum_{i=1}^n \langle \mp 2\pi \nabla_{\! z} G_i, \tilde v_i \rangle \dd\mu^\pm.
\]
As a sanity check, one easily verifies that this vanishes when $\tilde v_i =
\tilde v_*$ is the same Killing field for all $i$.

\medskip

Finally, by symmetry, one easily verifies that $\langle \nabla_{\! p} \log
I^\pm_\varepsilon, v\rangle = 0$ for any $v$ when $p_1,\cdots,p_8$ are the
vertices of the cube. So the cube is indeed a critical point of $J_8$.

\section{The Hessian}
\label{app:hessian}

We compute,
\begin{align*}
	\langle \nabla_{\! p_i} \rho^\pm_\varepsilon, v_i\rangle
	&=
	\frac{\langle \pm 2\pi \nabla_{\! p_i} G_i, v_i \rangle \exp(\pm u)}{I_\varepsilon^\pm}
	-
	\frac{\exp(\pm u) \langle \nabla_{\! p_i} I_\varepsilon^\pm, v_i\rangle}{(I_\varepsilon^\pm)^2}
	\\
	&= \rho_\varepsilon^\pm \Big(
		\langle \mp 2\pi \nabla_{\! z} G_i, \tilde v_i \rangle
		- \int_{\Omega_\varepsilon} \langle \mp 2\pi \nabla_{\! z} G_i, \tilde v_i \rangle \dd\mu_\varepsilon^\pm
		- \oint_{\partial D^i_\varepsilon} \langle n_i, \tilde v_i \rangle \rho_\varepsilon^\pm \dd s
	\Big)
\end{align*}

The Hessian
\begin{align}
	\Hess_{\! p} \log I^\pm_\varepsilon (v , v)
	=&
	\langle \nabla_{\! p} \langle \nabla_{\! p} \log I^\pm_\varepsilon, v\rangle, v \rangle \nonumber \\
	=&\phantom{+}
	\int_{\Omega_\varepsilon} \sum_{i=1}^n \langle \nabla_{\! p_i} \langle \pm 2\pi \nabla_{\! p_i} G_i, v_i \rangle, v_i \rangle \rho_\varepsilon^\pm \, \dd A \label{eq:l1}\\
	&+ \int_{\Omega_\varepsilon} \sum_{i=1}^n \langle \pm 2\pi \nabla_{\! p_i} G_i, v_i \rangle \sum_{j=1}^n \langle \nabla_{\! p_j} \rho_\varepsilon^\pm, v_j \rangle \, \dd A \label{eq:l2}\\
	&+ \sum_{i=1}^n \oint_{\partial D^i_\varepsilon} \langle n_i, \tilde v_i \rangle \sum_{j=1}^n \langle \nabla_{\! p_j} \rho_\varepsilon^\pm, v_j \rangle \dd s \label{eq:l3}\\
	&+ \sum_{j=1}^n \oint_{\partial D^j_\varepsilon} \langle n_j, \tilde v_j \rangle \sum_{i=1}^n \langle \pm 2\pi \nabla_{\! p_i} G_i, v_i \rangle \rho_\varepsilon^\pm \, \dd s \label{eq:l4}\\
	&+ \sum_{i=1}^n \oint_{\partial D^i_\varepsilon} \langle n_i, \tilde v_i \rangle \langle \nabla_{\! z} \rho_\varepsilon^\pm, \tilde v_i \rangle \dd s \nonumber
\end{align}

We expand each term
\begin{flalign*}
	\text{\eqref{eq:l1}}
	=& \int_{\Omega_\varepsilon} \sum_{i=1}^n \pm 2\pi \Hess_{\! z} G_i (\tilde v_i, \tilde v_i) \dd\mu_\varepsilon^\pm &&
\end{flalign*}
\begin{flalign*}
	\text{\eqref{eq:l2}}
	=& \int_{\Omega_\varepsilon} \sum_{i=1}^n \langle \mp 2\pi \nabla_{\! z} G_i, \tilde v_i \rangle \sum_{j=1}^n \langle \nabla_{\! p_j} \rho_\varepsilon^\pm, v_j \rangle \, \dd A &&\\
	=& \int_{\Omega_\varepsilon} \sum_{i=1}^n \langle \mp 2\pi \nabla_{\! z} G_i, \tilde v_i \rangle \sum_{j=1}^n \langle \mp 2\pi \nabla_{\! z} G_j, \tilde v_j \rangle \dd\mu_\varepsilon^\pm &&\\
	&- \int_{\Omega_\varepsilon} \sum_{i=1}^n \langle \mp 2\pi \nabla_{\! z} G_i, \tilde v_i \rangle \dd\mu_\varepsilon^\pm
	\int_{\Omega_\varepsilon} \sum_{j=1}^n \langle \mp 2\pi \nabla_{\! z} G_j, \tilde v_j \rangle \dd\mu_\varepsilon^\pm &&\\
	&- \int_{\Omega_\varepsilon} \sum_{i=1}^n \langle \mp 2\pi \nabla_{\! z} G_i, \tilde v_i \rangle \dd\mu_\varepsilon^\pm 
	\sum_{j=1}^n \oint_{\partial D^j_\varepsilon} \langle n_j, \tilde v_j \rangle \rho_\varepsilon^\pm \, \dd s &&
\end{flalign*}
\begin{flalign*}
	\text{\eqref{eq:l3}}
	=& \sum_{i=1}^n \oint_{\partial D^i_\varepsilon} \langle n_i, \tilde v_i \rangle \sum_{j=1}^n \langle \mp 2\pi \nabla_{\! z} G_j, \tilde v_j \rangle \rho_\varepsilon^\pm \, \dd s &&\\
	&- \sum_{i=1}^n \oint_{\partial D^i_\varepsilon} \langle n_i, \tilde v_i \rangle\rho_\varepsilon^\pm \, \dd s
	\int_{\Omega_\varepsilon} \sum_{j=1}^n \langle \mp 2\pi \nabla_{\! z} G_j, \tilde v_j \rangle \dd\mu_\varepsilon^\pm &&\\
	&- \sum_{i=1}^n \oint_{\partial D^i_\varepsilon} \langle n_i, \tilde v_i \rangle\rho_\varepsilon^\pm \, \dd s
	\sum_{j=1}^n \oint_{\partial D^j_\varepsilon} \langle n_j, \tilde v_j \rangle \rho_\varepsilon^\pm \, \dd s, &&
\end{flalign*}
and
\begin{flalign*}
	\text{\eqref{eq:l4}}
	=& \sum_{j=1}^n \int_{\partial D^j_\varepsilon} \langle n_j, \tilde v_j \rangle \sum_{i=1}^n \langle \mp 2\pi \nabla_{\! z} G_i, \tilde v_i \rangle \rho_\varepsilon^\pm \, \dd s. &&
\end{flalign*}

Putting them together gives
\begin{align}
	\Hess_{\! p} \log I^\pm_\varepsilon (v, v)
	=& \int_{\Omega_\varepsilon} \Big( \sum_{i=1}^n \langle 2\pi \nabla_{\! z} G_i, \tilde v_i \rangle \Big)^2 \dd\mu_\varepsilon^\pm \label{eq:h1}\\
	&+ \int_{\Omega_\varepsilon} \sum_{i=1}^n \pm 2\pi \Hess_{\! z} G_i (\tilde v_i, \tilde v_i) \dd\mu_\varepsilon^\pm  \label{eq:h2}\\
	&- \Big(
		\int_{\Omega_\varepsilon} \sum_{i=1}^n \langle \mp 2\pi \nabla_{\! z} G_i, \tilde v_i \rangle \dd\mu_\varepsilon^\pm
		+ \sum_{i=1}^n \oint_{\partial D^i_\varepsilon} \langle n_i, \tilde v_i \rangle \rho_\varepsilon^\pm \dd s
	\Big)^2 \label{eq:h3}\\
	&+ \sum_{i=1}^n \oint_{\partial D^i_\varepsilon} \langle n_i, \tilde v_i \rangle \sum_{j=1}^n \langle \mp 2\pi \nabla_{\! z} G_j, \tilde v_j \rangle \rho_\varepsilon^\pm \, \dd s  \label{eq:h4}\\
	&+ \sum_{i=1}^n \oint_{\partial D^i_\varepsilon} \langle n_i, \tilde v_i \rangle \sum_{j=1}^n \langle \mp 2\pi \nabla_{\! z} G_j, \tilde v_j - \tilde v_i \rangle \rho_\varepsilon^\pm \, \dd s \label{eq:h5}
\end{align}

Note again the independence of the choice of the Killing fields $\tilde v_i$
(by the same argument as before) and, when $\tilde v_i = \tilde v_*$ is the
same Killing field for all $i$,
\begin{multline*}
	\Hess_{\! p} \log I^\pm_\varepsilon (v, v)
	= \int_{\Omega_\varepsilon} \langle \nabla_{\! z} u, \tilde v_* \rangle^2 \dd\mu_\varepsilon^\pm
	+ \int_{\Omega_\varepsilon} \pm \Hess_{\! z} u (\tilde v_*, \tilde v_*) \dd\mu_\varepsilon^\pm\\
	+ \sum_{i=1}^n \oint_{\partial D^i_\varepsilon} \langle n_i, \tilde v_* \rangle \langle \mp \nabla_{\! z} u, \tilde v_* \rangle \rho_\varepsilon^\pm \, \dd s = 0.
\end{multline*}

The boundary term~\eqref{eq:h5} converges to $0$ because
$D^i_\varepsilon$ only contains the singularity of $G_j$ when $j=i$.  For the
same reason, in the limit $\varepsilon \to 0$, we may rewrite the ``coupling''
boundary term \eqref{eq:h4} into the form
\begin{align*}
	&\lim_{\varepsilon \to 0} \sum_{i=1}^n \oint_{\partial D^i_\varepsilon} \langle n_i, \tilde v_i \rangle \langle \mp 2\pi \nabla_{\! z} G_i, \tilde v_i \rangle \rho_\varepsilon^\pm \, \dd s\\
	=&\lim_{\varepsilon \to 0} \sum_{i=1}^n \Big[
		- \int_{\Omega_\varepsilon} \langle \nabla_{\! z} u, \tilde v_i \rangle \langle 2\pi \nabla_{\! z} G_i, \tilde v_i \rangle \, \dd \mu_\varepsilon^\pm
		\mp \int_{\Omega_\varepsilon} 2\pi \Hess_{\! z} G_i (\tilde v_i, \tilde v_i) \, \dd \mu_\varepsilon^\pm
	\Big],
\end{align*}
where the second term cancels with~\eqref{eq:h2}.

We have seen that the second term in~\eqref{eq:h3} converges to $0$ as
$\varepsilon \to 0$.  At a critical point, its first term~\eqref{eq:h3}
vanishes.  If this is the case, we have
\begin{align*}
	&\lim_{\varepsilon \to 0} \Hess_{\! p} \log I^\pm_\varepsilon (v, v) \\
	=&\lim_{\varepsilon \to 0}
	\int_{\Omega_\varepsilon} \Big( \sum_{i=1}^n \langle 2\pi \nabla_{\! z} G_i, \tilde v_i \rangle \Big)^2 \dd\mu_\varepsilon^\pm
	- \int_{\Omega_\varepsilon} \sum_{i=1}^n \langle \nabla_{\! z} u, \tilde v_i \rangle \langle 2\pi \nabla_{\! z} G_i, \tilde v_i \rangle \, \dd \mu_\varepsilon^\pm\\
	=&\lim_{\varepsilon \to 0}
	\int_{\Omega_\varepsilon} \sum_{i=1}^n \sum_{j=1}^n \langle 2\pi \nabla_{\! z} G_j, \tilde v_j - \tilde v_i \rangle \langle 2\pi \nabla_{\! z} G_i, \tilde v_i \rangle \, \dd \mu_\varepsilon^\pm.
\end{align*}
When $i \ne j$, the convergence follows from the same argument for the
convergence of the gradient~\eqref{eq:grad}.  The possible divergence with $i
= j$ is ruled out by $\tilde v_j - \tilde v_i$.  In the following, we will
simply write
\[
	\Hess_{\! p} \log I^\pm (v, v)
	=
	\int_{\uS^2} \sum_{i=1}^n \sum_{j=1}^n \langle 2\pi \nabla_{\! z} G_j, \tilde v_j - \tilde v_i \rangle \langle 2\pi \nabla_{\! z} G_i, \tilde v_i \rangle \, \dd \mu^\pm.
\]

\section{Definiteness of the Hessian on the irreps}
\label{app:irreps}

Recall from the proof the assumed positions of the vertices and the generators
of each irrep.

\subsection{The definiteness on \texorpdfstring{$E_u$}{Eu} and \texorpdfstring{$E_g$}{Eg}}

We have chosen the generator vectors, intentionally, with the following
property.  The eight vertices are partitioned into two subsets $A$ and $B$,
with four vertices each, such that vertices in $A$ extend to the Killing field
$\partial_\phi$, while vertices in $B$ extend to $-\partial_\phi$.  Then we
have
\begin{align*}
	&\Hess_{\! p} \log I^\pm (v, v)
	=
	\int_{\uS^2} \sum_{i=1}^n \sum_{j=1}^n \langle 2\pi \nabla_{\! z} G_j, \tilde v_j - \tilde v_i \rangle \langle 2\pi \nabla_{\! z} G_i, \tilde v_i \rangle \, \dd \mu^\pm\\
	=& -4
	\int_{\uS^2} \sum_{i \in A} \sum_{j \in B} (2\pi \partial_\phi G_j)(2\pi \partial_\phi G_i) \, \dd \mu^\pm
	= -16\pi^2 \int_{\uS^2} \partial_\phi G_A \partial_\phi G_B \, \dd \mu^\pm,
\end{align*}
where
\[
	G_A = \sum_{i \in A} G_i, \qquad G_B = \sum_{i \in B} G_i.
\]

One then easily observes that $\partial_\phi G_A \partial_\phi G_B$ is
everywhere non-positive for $E_g$, and everywhere non-negative for $E_u$.
Since the integrand is not identically zero, this proves that the Hessian is
positive definite on $E_g$, and negative definite on $E_u$.

\subsection{The definiteness on \texorpdfstring{$T_{1u}$}{T1u}}

To study the definiteness of on $T_{1u}$, we need to first study the sign of
\[
	H^\pm_{ij} = \int_{\uS^2} \partial_\phi G_i \partial_\phi G_j \,
	\dd \mu^\pm.
\]
When $p_i$ and $p_j$ are antipodal (e.g. $(i,j) = (1,7)$) or on a horizontal
face diagonal (e.g. $(i,j) = (1,3)$), one easily sees that the integrand
$\partial_\phi G_i \partial_\phi G_j \le 0$ everywhere on the sphere, so
$H^\pm_{ij}$ is negative.  When $p_i$ and $p_j$ are vertical neighbors (e.g.
$(i,j) = (1,5)$), one easily sees that the integrand $\partial_\phi G_i
\partial_\phi G_j \ge 0$ everywhere on the sphere, so $H^\pm_{ij}$ is positive.
Otherwise, the following lemma would be useful:

\begin{lemma}
	Let $f(\theta, \phi)$ be a function defined on $\uS^2$ that is odd in $\phi$,
	and $\rho$ be a positive function on $\uS^2$ with
	\[
		\rho(\theta, \phi) = \rho(\theta, \phi+\tfrac{\pi}{2}) = \rho(\pi - \theta, \phi).
	\]
	Assume that, for every $0 < \theta < \pi$ and $0 < \phi < \pi/2$, we have
	\[
		f(\theta, \phi) > f(\theta, \pi - \phi), \qquad
		f(\theta, \pi/2 - \phi) > f(\theta, \pi/2 + \phi).
	\]
	Then the integrals
	\begin{align*}
		K_1 =& \int_{\uS^2} f(\theta, \phi) f(\theta, \phi - \tfrac{\pi}{2}) \rho \, \dd A < 0,\\
		K_2 =& \int_{\uS^2} f(\theta, \phi) f(\pi - \theta, \phi - \tfrac{\pi}{2}) \rho \, \dd A < 0.
	\end{align*}
\end{lemma}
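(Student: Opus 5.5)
The plan is to fold the integrals using the symmetries of $\rho$ and $\dd A$, so that the integrand becomes a sum of products whose signs are fixed by the two hypotheses. First, I will extract what oddness and $2\pi$-periodicity in $\phi$ give. From $f(\theta,-\phi)=-f(\theta,\phi)$ we get
\[
f(\theta,\phi+\pi)=-f(\theta,\pi-\phi), \qquad f(\theta,\phi+\tfrac{3\pi}{2})=-f(\theta,\tfrac{\pi}{2}-\phi).
\]
Fix $\theta\in(0,\pi)$ and $\phi\in(0,\pi/2)$, and write $a_k=f(\theta,\phi+k\pi/2)$ for $k=0,1,2,3$, with indices taken mod $4$. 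The identities above turn the hypotheses into
\[
a_0+a_2 = f(\theta,\phi)-f(\theta,\pi-\phi)>0, \qquad a_1+a_3 = f(\theta,\tfrac{\pi}{2}+\phi)-f(\theta,\tfrac{\pi}{2}-\phi)<0 .
\]

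For $K_1$, I will split $\phi\in[0,2\pi)$ into the four quarter-intervals $\phi+k\pi/2$ with $\phi\in[0,\pi/2)$. Both $\rho$ and $\dd A=\sin\theta\,\dd\theta\,\dd\phi$ are invariant under $\phi\mapsto\phi+\pi/2$, so the four pieces share the same weight $\rho(\theta,\phi)\sin\theta$. The integrand then becomes the cyclic sum
\[
\sum_k a_k a_{k-1} = a_0a_3+a_1a_0+a_2a_1+a_3a_2 = (a_0+a_2)(a_1+a_3),
\]
which is strictly negative on the open set $0<\theta<\pi$, $0<\phi<\pi/2$. Since $\rho>0$, integrating gives $K_1<0$; the excluded set is a null set (poles and $\phi\in\{0,\pi/2\}$).

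For $K_2$, I will additionally use the invariance of $\rho$ and $\dd A$ under $\theta\mapsto\pi-\theta$. Averaging $K_2$ with its image under this reflection expresses $2K_2$ as an integral over the same fundamental domain. Write $b_k=f(\pi-\theta,\phi+k\pi/2)$. The four $\phi$-shifts and the two $\theta$-reflections together produce the integrand
\[
\sum_k \bigl(a_k b_{k-1}+b_k a_{k-1}\bigr) = (a_0+a_2)(b_1+b_3)+(a_1+a_3)(b_0+b_2),
\]
since both sides are the sum of all $a_kb_l$ with $k-l$ odd. Applying the hypotheses at $\pi-\theta\in(0,\pi)$ gives $b_0+b_2>0$ and $b_1+b_3<0$, so both products are strictly negative. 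Hence $K_2<0$.

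The only step needing real care is the bookkeeping of the folding, namely checking that the cyclic sums factor as claimed. The rest follows directly from the hypotheses.
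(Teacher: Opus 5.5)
Your proposal is correct and follows essentially the paper's proof: fold the $\phi$-integral onto a quarter period, so that for $K_1$ the integrand factors as $(a_0+a_2)(a_1+a_3)$, and for $K_2$ pair $\theta$ with $\pi-\theta$ to get $(a_0+a_2)(b_1+b_3)+(a_1+a_3)(b_0+b_2)$; oddness and the two hypotheses then fix the signs. The only cosmetic difference is that you average over the full range $0<\theta<\pi$ to obtain $2K_2$, whereas the paper integrates the same paired integrand over $0<\theta<\pi/2$.
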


\begin{proof}
	We decompose the sphere into four quadrants.  Then
	\begin{align*}
		K_1 = \int_0^\pi \sin\theta \, d\theta \int_{0}^{\tfrac{\pi}{2}} & \rho \, d\phi
		\big(f(\theta, \phi) f(\theta, \phi-\tfrac{\pi}{2}) + f(\theta, \phi+\tfrac{\pi}{2}) f(\theta, \phi)\\
		&+ f(\theta, \phi+\pi) f(\theta, \phi+\tfrac{\pi}{2}) + f(\theta, \phi-\tfrac{\pi}{2}) f(\theta, \phi+\pi) \big)\\
		= \int_0^\pi \sin\theta \, d\theta \int_{0}^{\tfrac{\pi}{2}} \rho \, d\phi
		& \big( f(\theta, \phi) + f(\theta, \phi + \pi) \big) \big( f(\theta, \phi - \tfrac{\pi}{2}) + f(\theta, \phi + \tfrac{\pi}{2}) \big)\\
		= \int_0^\pi \sin\theta \, d\theta \int_{0}^{\tfrac{\pi}{2}} \rho \, d\phi
		& \big( f(\theta, \phi) - f(\theta, \pi - \phi) \big) \big(-f(\theta, \tfrac{\pi}{2} - \phi) + f(\theta, \tfrac{\pi}{2} + \phi) \big)
	\end{align*}
	For $0 < \phi < \tfrac{\pi}{2}$, by the assumed inequalities, $K_1$ is
	negative.  On the other hand,
	\begin{align*}
		K_2 = \int_0^{\tfrac{\pi}{2}} & \sin\theta \, d\theta \int_{0}^{\tfrac{\pi}{2}} \rho \, d\phi \big(
			f(\theta, \phi) f(\pi - \theta, \phi-\tfrac{\pi}{2}) + f(\pi - \theta, \phi) f(\theta, \phi-\tfrac{\pi}{2})\\
			&+f(\theta, \phi+\tfrac{\pi}{2}) f(\pi - \theta, \phi) + f(\pi - \theta, \phi+\tfrac{\pi}{2}) f(\theta, \phi)\\
			&+f(\theta, \phi+\pi) f(\pi - \theta, \phi+\tfrac{\pi}{2}) + f(\pi - \theta, \phi+\pi) f(\theta, \phi+\tfrac{\pi}{2})\\
			&+f(\theta, \phi-\tfrac{\pi}{2}) f(\pi - \theta, \phi+\pi) + f(\pi - \theta, \phi-\tfrac{\pi}{2}) f(\theta, \phi+\pi)
		\big)\\
		= \int_0^{\tfrac{\pi}{2}} & \sin\theta \, d\theta \int_{0}^{\tfrac{\pi}{2}} \rho \, d\phi \\ & \Big[
			\big(f(\pi - \theta, \phi) + f(\pi - \theta, \phi + \pi) \big) \big( f(\theta, \phi - \tfrac{\pi}{2}) + f(\theta, \phi + \tfrac{\pi}{2}) \big) \\
			&+\big(f(\theta, \phi) + f(\theta, \phi + \pi) \big) \big( f(\pi - \theta, \phi - \tfrac{\pi}{2}) + f(\pi - \theta, \phi + \tfrac{\pi}{2}) \big) 
		\Big] \\
		= \int_0^{\tfrac{\pi}{2}} & \sin\theta \, d\theta \int_{0}^{\tfrac{\pi}{2}}\rho \, d\phi \\ & \Big[
			\big(f(\pi - \theta, \phi) - f(\pi - \theta, \pi - \phi) \big) \big( -f(\theta, \tfrac{\pi}{2} - \phi) + f(\theta, \phi + \tfrac{\pi}{2}) \big) \\
			&+\big(f(\theta, \phi) - f(\theta, \pi - \phi) \big) \big( -f(\pi - \theta, \tfrac{\pi}{2} - \phi) + f(\pi - \theta, \phi + \tfrac{\pi}{2}) \big) 
		\Big]
	\end{align*}
	and the integrand is negative by the assumed inequalities.
\end{proof}

Using this lemma, one easily proves that if $p_i$ and $p_j$ are horizontal
neighbors (e.g. $(i,j) = (1,2)$) or on a vertical face diagonal (e.g. $(i,j) =
(1,6)$), we have $H^\pm_{ij} < 0$.

For $T_{1u}$, the integrand does not involve any pair of vertical neighbors, so
we can already conclude that the Hessian is positive definite on the irreps
$T_{1u}$.

\subsection{The definiteness on \texorpdfstring{$T_{2u}$}{T2u}}

For $T_{2u}$ we place the cube so that a pair of antipodal vertices are at the
north and the south pole.  So
\begin{align*}
	p_1 &= (0, 0),&
	p_5 &= (\pi, 0),\\
	p_2 &= (\theta_0, 0),&
	p_6 &= (\pi - \theta_0, \pi/3),\\
	p_3 &= (\theta_0, 2\pi/3),&
	p_7 &= (\pi - \theta_0, \pi),\\
	p_4 &= (\theta_0, 4\pi/3),&
	p_8 &= (\pi - \theta_0, 5\pi/3).
\end{align*}
where $\theta_0 = \arccos(-1/3)$.  Then a generator for $T_{2u}$ is given by
\[
	v_i = \frac{\partial_\phi}{\sin\theta}\Big|_{p_i} \text{ for } i \in \{2, 3, 4\},\qquad
	v_i =-\frac{\partial_\phi}{\sin\theta}\Big|_{p_i} \text{ for } i \in \{6, 7, 8\},\qquad
	v_1 = v_5 = 0.
\]
Note that the vertices $p_1$ and $p_5$ (at the poles) are actually fixed under
this generator.  The Hessian becomes
\[
	\Hess_{\! p} \log I^\pm (v, v)
	= -16\pi^2 \int_{\uS^2} \partial_\phi G_A \partial_\phi G_B \, \dd \mu^\pm,
\]
where $A=\{2,3,4\}$ and $B=\{6,7,8\}$.  One easily notices that the integrand
$\partial_\phi G_A \partial_\phi G_B \le 0$ everywhere on the sphere.  So the
Hessian is positive definite on $T_{2u}$.

\subsection{The definiteness on \texorpdfstring{$T_{2g}$}{T2g}}
\label{app:T2g}

This part contains the most involved computation.  The argument was found with the assistance of ChatGPT 5.6 Sol.

\subsubsection{Preparation}

Given a point $p \in \mathbb{S}^2$ with polar angle $\theta_0$ and azimuth
angle $\phi_0$, the Green function $G(\theta,\phi)$ centered at $p$ can be
written as (compare~\eqref{eq:Green})
\[
	\frac{1}{4\pi} \log \big(1 - \cos\theta \cos\theta_0 - \sin\theta \sin\theta_0 \cos(\phi-\phi_0)\big).
\]
After grouping the eight points into four antipodal pairs, we obtain
\begin{align*}
	\widehat G_1 = G_1+G_7 &= \frac{1}{4\pi}\log\frac{3-\zeta_1^2}{3},& \zeta_1 = \sqrt{2} x + z,\\
	\widehat G_2 = G_2+G_8 &= \frac{1}{4\pi}\log\frac{3-\zeta_2^2}{3},& \zeta_2 = \sqrt{2} y + z,\\
	\widehat G_3 = G_3+G_5 &= \frac{1}{4\pi}\log\frac{3-\zeta_3^2}{3},& \zeta_3 = \sqrt{2} x - z,\\
	\widehat G_4 = G_4+G_6 &= \frac{1}{4\pi}\log\frac{3-\zeta_4^2}{3},& \zeta_4 = \sqrt{2} y - z,
\end{align*}
where
\[
	x = \sin\theta\cos\phi, \qquad
	y = \sin\theta\sin\phi, \qquad
	z = \cos\theta.
\]
Then we have
\[
	\exp(u) = \exp\Big(2\pi\sum \widehat G_i\Big)
	= \frac{\sqrt{(3-\zeta_1^2)(3-\zeta_2^2)(3-\zeta_3^2)(3-\zeta_4^2)}}{9},
\]
and
\begin{align*}
	\frac{\partial_\phi}{\sin\theta} \widehat G_1 &= \frac{\zeta_1 \sin\phi}{\sqrt{2}\pi(3 - \zeta_1^2)}, &
	\frac{\partial_\phi}{\sin\theta} \widehat G_2 &= \frac{-\zeta_2 \cos\phi}{\sqrt{2}\pi(3 - \zeta_2^2)}, \\
	\frac{\partial_\phi}{\sin\theta} \widehat G_3 &= \frac{\zeta_3 \sin\phi}{\sqrt{2}\pi(3 - \zeta_3^2)}, &
	\frac{\partial_\phi}{\sin\theta} \widehat G_4 &= \frac{-\zeta_4 \cos\phi}{\sqrt{2}\pi(3 - \zeta_4^2)}.
\end{align*}
So we need to prove that
\[
	H^+ = \int_0^\pi \dd\theta \int_0^{2\pi}
	\frac{\dd\phi}{18\pi^2} \frac{P_1}{\sqrt{1-z^2} Q^{1/2}},\qquad
	H^- = \int_0^\pi \dd\theta \int_0^{2\pi}
	\frac{9 \dd\phi}{2\pi^2} \frac{P_1}{\sqrt{1-z^2} Q^{3/2}}
\]
are both negative, where
\begin{align*}
	P_1 &= (\zeta_1y(3-\zeta_2^2) - \zeta_2x(3-\zeta_1^2))(\zeta_3y(3-\zeta_4^2)-\zeta_4x(3-\zeta_3^2)),\\
	Q &= (3-\zeta_1^2)(3-\zeta_2^2)(3-\zeta_3^2)(3-\zeta_4^2).
\end{align*}

\subsubsection{Symmetrization}

The integrals are invariant under the symmetries
\begin{itemize}
	\item $x \leftrightarrow -x$ that induces $\zeta_1 \leftrightarrow -\zeta_3$,
	\item $y \leftrightarrow -y$ that induces $\zeta_2 \leftrightarrow -\zeta_4$,
	\item $z \leftrightarrow -z$ that induces $(\zeta_1, \zeta_2) \leftrightarrow (\zeta_3, \zeta_4)$, and
	\item $x \leftrightarrow y$ that induces $(\zeta_1, \zeta_3) \leftrightarrow (\zeta_2, \zeta_4)$.
\end{itemize}
After symmetrization, we have
\begin{align*}
	H^+ &= 8 \int_0^{\pi/2} \dd\theta \int_0^{\pi/4}
	\frac{\dd\phi}{18\pi^2} \frac{P}{\sqrt{1-z^2} Q^{1/2}},\\
	H^- &= 8 \int_0^{\pi/2} \dd\theta \int_0^{\pi/4}
	\frac{9 \dd\phi}{2\pi^2} \frac{P}{\sqrt{1-z^2} Q^{3/2}},
\end{align*}
where $P=P_1 + P_2$ and
\[
	P_2 = (\zeta_3y(3-\zeta_2^2) - \zeta_2x(3-\zeta_3^2))(\zeta_1y(3-\zeta_4^2)-\zeta_4x(3-\zeta_1^2)).
\]
Define $s = x^2 + y^2 = 1-z^2$ and $t = x^2 - y^2 = s \cos(2\phi)$.  Then $P$
and $Q$ can be written as polynomials of $s$ and $t$ as follows.
\begin{align*}
	P &= -4t^4 + 2 (7 s^2 - 25 s + 20) t^2 + 8 (s^2+s)(s^2-1)\\
	% (2s+t-1)(s-t)((2+t)^2-4(s-t)(1-s)) + (2s-t-1)(s+t)((2-t)^2-4(s+t)(1-s))
	% - 2 (s^2 - t^2) (-4 + 2 s - t) (-4 + 2 s + t)\\
	Q &= ((2-t)^2-4(s+t)(1-s))((2+t)^2-4(s-t)(1-s)),
\end{align*}
This is first suggested by ChatGPT 5.6 Sol, then verified by Mathematica and
manually by the author.

\subsubsection{Bounding $P$ and $Q$}

We point out the following bounds of $P$ and $Q$:

\begin{description}
	\item[$P > 0$ only when $s > \tfrac{8}{9}$]
		Write
		\[
			P = -4\tau^2 + 2 \tau (7 s^2 - 25 s + 20) + 8 s (s + 1) (s^2 - 1)
		\]
		with $\tau = t^2$.  We compute the partial derivative
		\[
			\frac{\partial P}{\partial \tau} = -8\tau + 2 (7 s^2 - 25 s + 20).
		\]
		For $0 \le t \le s \le 8/9$, we have
		\[
			\frac{\partial P}{\partial \tau} \ge 2 (3s^2 - 25 s + 20) > 0,
		\]
		so
		\[
			P(t,s) \le P(s,s) = 2 s (2-3s)^2 (s-1) \le 0.
		\]
		This proves that $P \le 0$ when $s \le 8/9$.

	\item[$P \le s$ when $\tfrac{8}{9} \le s \le 1$]
		\[
			\frac{P}{s} = -4\frac{\tau^2}{s} + 2 \frac{\tau}{s} (7 s^2 - 25 s + 20) + 8 (s + 1) (s^2 - 1)
		\]
		has an unrestricted maximum
		\[
			8 (s+1) (s^2-1) + \frac{(7 s^2 - 25 s + 20)^2}{4s},
		\]
		which is indeed $\le 1$ for $8/9 \le s \le 1$.

	\item[$Q \le 16$]  Write
		\[
			Q = \tau^2 - 8\tau (s^2-7s+7) + 16(s^2-s+1)^2
		\]
		with $\tau = t^2$.  Then the partial derivative
		\begin{equation}\label{eq:dQdt}
			\frac{\partial Q}{\partial \tau} = 2\tau - 8(s^2-7s+7)
		\end{equation}
		is negative for $0 \le t \le s \le 1$.  So we have
		\[
			Q(t,s) \le Q(0,s) = 16 (s^2-s+1)^2 \le 16,
		\]
		with equality when $t = 0$ and $s=0$ or $1$.

	\item[$Q \ge (\tfrac{52}{27})^2$ when $s \ge \tfrac{8}{9}$] As the partial
		derivative~\eqref{eq:dQdt} is negative, we have for $0 \le t \le s$ and
		$8/9 \le s \le 1$, we have
		\[
			Q(t,s) \ge Q(s,s) = (-4+4s+3s^2)^2 \ge \Big(\frac{52}{27}\Big)^2,
		\]
		with equality when $t = s = 8/9$.
\end{description}

\subsubsection{Bounding the integrals}

Finally, we estimate the integrals
\[
	H^+ = 8 \int_0^1 \dd z \int_0^{\pi/4}
	\frac{\dd\phi}{18\pi^2} \frac{P}{s Q^{1/2}},\qquad
	H^- = 8 \int_0^1 \dd z \int_0^{\pi/4}
	\frac{9 \dd\phi}{2\pi^2} \frac{P}{s Q^{3/2}}.
\]
Write $P^+ = \max(P, 0)$.  Then
\[
	\frac{P}{\sqrt{Q}^\alpha} \le \frac{P}{4^\alpha} + \Big(\big(\frac{27}{52}\big)^\alpha - \frac{1}{4^\alpha}\Big) P^+.
\]
where $\alpha = 1$ for $H^+$ and $\alpha = 3$ for $H^-$.
Therefore,
\begin{align}
	\int_0^1 \dd z \int_0^{\pi/4} 
	\dd\phi \frac{P}{s \sqrt{Q}^\alpha} \le &
	\frac{1}{4^\alpha} \int_0^1 \dd z \int_0^{\pi/4} \dd\phi \frac{P}{s}\label{eq:fstint}\\
	& + \Big(\big(\frac{27}{52}\big)^\alpha - \frac{1}{4^\alpha}\Big) \int_0^1 \dd z \int_0^{\pi/4} \dd\phi \frac{P^+}{s}\label{eq:sndint}
\end{align}

The integral on the right-hand-side of~\eqref{eq:fstint} can be explicitly
computed
\[
	\int_0^1 \dd z \int_0^{\pi/4} \dd\phi \frac{P}{s} = -\frac{76}{105}\pi.
\]
The integral~\eqref{eq:sndint} can be restricted to the region $8/9 \le s \le
1$, where we know that $P/s \le 1$, so
\[
	\int_0^1 \dd z \int_0^{\pi/4} \dd\phi \frac{P^+}{s} = 
	\int_0^{1/3} \dd z \int_0^{\pi/4} \dd\phi \frac{P^+}{s} \le \frac{\pi}{12}.
\]
Putting these all together yields
\[
	\int_0^1 \dd z \int_0^{\pi/4} 
	\dd\phi \frac{P}{s \sqrt{Q}^\alpha} \le
	- \frac{1}{4^\alpha}\frac{76}{105}\pi + \Big(\big(\frac{27}{52}\big)^\alpha - \frac{1}{4^\alpha}\Big) \frac{\pi}{12}.
\]
This is negative for $\alpha=1$ and $\alpha=3$, thereby proving that $H^\pm$
are both negative.

\end{document}